\documentclass[twoside, 12pt]{amsart}
\usepackage{latexsym}
\usepackage{amssymb,amsmath,amsopn}
\usepackage[dvips]{graphicx}   
\usepackage{color,epsfig}      

\textwidth=16cm\textheight=24,5cm\parindent =0.5cm\parskip=3pt
\hoffset=-1,9cm \topmargin=-1,5cm

\newtheorem{theorem}{Theorem}

\newtheorem{defi}{Definition}
\newtheorem{problem}{Problem}
\newtheorem{cor}{Corollary}
\newtheorem{conj}{Conjecture}

\DeclareMathOperator{\conv}{conv}
\DeclareMathOperator{\dif}{d}
\DeclareMathOperator{\vol}{vol}
\DeclareMathOperator{\area}{area}

\DeclareMathOperator{\bd}{bd}

\newcommand{\K}{\mathcal{K}}
\renewcommand{\S}{\mathcal{S}}
\renewcommand{\Re}{\mathbb R}
\newcommand{\B}{\mathbf B}
\newcommand{\Sph}{\mathbb{S}}
\renewcommand{\P}{\mathcal{P}}

\begin{document}

\title[On the volume of convex hull]{On the volume of the convex hull of two convex bodies}
\author[\'A. G.Horv\'ath and Z. L\'angi]{\'Akos G.Horv\'ath and Zsolt L\'angi}

\address{\'A. G.Horv\'ath, Dept. of Geometry, Budapest University of Technology,
Egry J\'ozsef u. 1., Budapest, Hungary, 1111}
\email{ghorvath@math.bme.hu}
\address{Zsolt L\'angi, Dept. of Geometry, Budapest University of Technology,
Egry J\'ozsef u. 1., Budapest, Hungary, 1111}
\email{zlangi@math.bme.hu}

\subjclass{52A40, 52A38, 26B15}
\keywords{convex hull, volume inequality, isometry, reflection, translation.}

\begin{abstract}
In this note we examine the volume of the convex hull of two congruent copies of a convex body in Euclidean $n$-space,
under some subsets of the isometry group of the space.
We prove inequalities for this volume if the two bodies are translates, or reflected copies of each other about a common point
or a hyperplane containing it. In particular, we give a proof of a related conjecture of Rogers and Shephard.
\end{abstract}

\maketitle

\section{Introduction}

The volume of the convex hull of two convex bodies in the Euclidean $n$-space $\Re^n$ has been in the focus of research since the 1950s.
One of the first results in this area is due to F\'ary and R\'edei \cite{fary-redei},
who proved that if one of the bodies is translated on a line at a constant velocity, then the volume of their convex hull is a convex function of time.
This result was reproved by Rogers and Shephard \cite{rogers-shephard2} in 1958, using a more general theorem about the so-called \emph{linear parameter systems},
and for polytopes by Ahn, Brass and Shin \cite{ahn} in 2008.

In this paper we investigate the following quantities.

\begin{defi}
For two convex bodies $K$ and $L$ in $\Re^n$, let
\[
c(K,L)=\max\left\{ \vol (\conv (K'\cup L')) : K' \cong K, L' \cong L \mbox{ and } K'\cap L'\neq\emptyset \right\},
\]
where $\cong$ and $\vol$ denotes congruence and $n$-dimensional Lebesgue measure, respectively. Furthermore, if $\S$ is a set of isometries of $\Re^n$, we set
\[
c(K|\S)= \frac{\max \left\{ \vol( \conv (K \cup K' )) : K \cap K' \neq \emptyset , K' = \sigma(K) \hbox{ for some } \sigma \in \S \right\}}{\vol(K)}.
\] 
\end{defi}

We note that a quantity similar to $c(K,L)$ was defined by Rogers and Shephard \cite{rogers-shephard2}, in which congruent copies were replaced
by translates. Another related quantity is investigated in \cite{gho}, where the author examines $c(K,K)$ in the special case that $K$ is a regular simplex and the two congruent copies have the same centre.

In \cite{rogers-shephard2}, Rogers and Shephard used linear parameter systems to show that the minimum of $c(K|\S)$,
taken over the family of convex bodies in $\Re^n$, is its value for an $n$-dimensional Euclidean ball,
if $\S$ is the set of translations or that of reflections about a point.
Nevertheless, their method, approaching a Euclidean ball by suitable Steiner symmetrizations and showing that during this process the examined quantities do not increase, does not characterize the convex bodies for which the minimum is attained;
they conjectured that, in both cases, the minimum is attained only for ellipsoids (cf. p. 94 of \cite{rogers-shephard2}).
We note that the method of Rogers and Shephard \cite{rogers-shephard2} was used also in \cite{macbeath}.
We remark that the conjecture in \cite{rogers-shephard2} follows from a straightforward modification of Theorems 9 and 10 of \cite{MM06}.
This proof requires an extensive knowledge of measures in normed spaces. Our goal in part is to give a proof using more classical tools.

We treat these problems in a more general setting. For this purpose, let $c_i(K)$ be the value of $c(K|\S)$, where $\S$ is the set of reflections about the $i$-flats of $\Re^n$, and $i=0,1,\ldots,n-1$.
Similarly, let $c^{tr}(K)$ and $c^{co}(K)$ be the value of $c(K|\S)$ if $\S$ is the set of translations and that of all the isometries, respectively.
In Section~\ref{sec:standard} we examine the minima of these quantities. In particular, in Theorem~\ref{thm:translate}, we give another proof that the minimum of $c^{tr}(K)$, over the family of convex bodies in $\Re^n$, is its value for Euclidean balls, and show also that the minimum is attained if, and only if, $K$ is an ellipsoid. This verifies the conjecture in \cite{rogers-shephard2} for translates.
In Theorem~\ref{thm:pointreflection}, we characterize the plane convex bodies for which $c^{tr}(K)$ is attained for any touching pair of translates of $K$, showing a connection of the problem with Radon norms.
In Theorems~\ref{thm:centralsymmetry} and \ref{thm:hyperplane}, we present similar results about the minima of $c_0(K)$ and $c_{n-1}(K)$, respectively. In particular, we prove that, over the family of convex bodies, $c_0(K)$ is minimal for ellipsoids, and $c_{n-1}(K)$ is minimal for Euclidean balls. The first result proves the conjecture of Rogers and Shephard for copies reflected about a point.

The maximal values of $c^{tr}(K)$ and $c_0(K)$, for $K \in \K_n$, and the convex bodies for which these values are attained, are determined in \cite{rogers-shephard2}; the authors prove that $c_0(K) \leq 2^n$ with equality (only) for simplices, and 
$c^{tr}(K) \leq n+1$, with equality for what the authors call \emph{pseudo-double-pyramids}.
Using a suitable simplex as $K$, it is easy to see that the set $\{c_i(K) : K \in \K_n \}$ is not bounded from above for
$i = 1,\ldots,n-1$. This readily yields the same statement for $c^{co}(K)$ as well. 

In Section~\ref{sec:discrete} we introduce variants of these quantities for convex $m$-gons in $\Re^2$, and for small values of $m$, characterize the polygons for which these quantities are minimal. Finally, in Section~\ref{sec:remarks} we collect some additional remarks and questions.

During the investigation, $\K_n$ denotes the family of $n$-dimensional convex bodies. We let $\B^n$ be the $n$-dimensional unit ball with the origin $o$
of $\Re^n$ as its centre, and set $\Sph^{n-1} = \bd \B^n$ and $v_n = \vol(\B^n)$. Finally, we denote $2$- and $(n-1)$-dimensional Lebesgue measure by $\area$ and $\vol_{n-1}$, respectively. For any $K \in \K_n$ and $u \in \Sph^{n-1}$, $K | u^\perp$ denotes the orthogonal projection of $K$ onto the hyperplane passing through the origin $o$ and perpendicular to $u$. The \emph{polar} of a convex body $K$, containing $o$ in its interior, is the set
\[
K^\circ = \{ v \in \Re^n : \langle u, v \rangle \leq 1 \hbox{ for every } u \in K \}, 
\]
where $\langle .,. \rangle$ is the usual inner product of $\Re^n$.

\section{The minima of $c^{tr}(K)$, $c_0(K)$ and $c_{n-1}(K)$}\label{sec:standard}

\begin{theorem}\label{thm:translate}
For any $K\in \K_n$ with $n\geq 2$, we have $c^{tr}(K) \geq 1 + \frac{2v_{n-1}}{v_n}$ with equality if, and only if, $K$ is an ellipsoid.
\end{theorem}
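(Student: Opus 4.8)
The plan is to reduce $c^{tr}(K)$ to a single extremal quantity and then bound it using two classical affine inequalities whose equality cases are already understood. First I would evaluate $c^{tr}(K)$ explicitly. For $t\neq o$ put $u=t/|t|$. Since every point $x+\lambda t$ with $x\in K$ and $\lambda\in[0,1]$ is the convex combination $(1-\lambda)x+\lambda(x+t)$ of a point of $K$ and a point of $K+t$, and since $K+[o,t]$ is convex and contains both bodies, one has $\conv(K\cup(K+t))=K+[o,t]$. Slicing by lines parallel to $u$ (each fibre of $K$ grows by $|t|$, while the projection onto $u^\perp$ is unchanged) gives
\[
\vol\big(\conv(K\cup(K+t))\big)=\vol(K)+|t|\,\vol_{n-1}(K|u^\perp).
\]
Moreover $K\cap(K+t)\neq\emptyset$ iff $t\in K-K$, and for fixed $u$ the largest admissible $|t|$ is the length $M(u)$ of a longest chord of $K$ parallel to $u$. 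Hence
\[
c^{tr}(K)=1+\frac{1}{\vol(K)}\max_{u\in\Sph^{n-1}}M(u)\,\vol_{n-1}(K|u^\perp),
\]
so the theorem is equivalent to $\max_{u}M(u)\vol_{n-1}(K|u^\perp)\ge\frac{2v_{n-1}}{v_n}\vol(K)$, with equality only for ellipsoids. (One checks directly that the left-hand side is affine invariant and equals $2v_{n-1}$ when $K=\B^n$.)

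The key step is to rewrite this maximum by polarity. Writing $h$ and $\rho$ for support and radial functions, we have $M(u)=\rho_{K-K}(u)$, and, introducing the projection body $\Pi K$ (the convex body with $h_{\Pi K}(u)=\vol_{n-1}(K|u^\perp)$), also $\vol_{n-1}(K|u^\perp)=h_{\Pi K}(u)=1/\rho_{(\Pi K)^\circ}(u)$. Therefore
\[
\max_{u}M(u)\vol_{n-1}(K|u^\perp)=\max_{u}\frac{\rho_{K-K}(u)}{\rho_{(\Pi K)^\circ}(u)}=\min\{\lambda>0:\ K-K\subseteq\lambda\,(\Pi K)^\circ\},
\]
the smallest dilate of the polar projection body that contains the difference body.

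Finally I would bound this dilation factor from below. If $K-K\subseteq\lambda(\Pi K)^\circ$, then $\vol(K-K)\le\lambda^n\vol((\Pi K)^\circ)$. The Brunn--Minkowski inequality gives $\vol(K-K)\ge 2^n\vol(K)$, with equality iff $K$ is centrally symmetric, and the Petty projection inequality gives $\vol((\Pi K)^\circ)\le(v_n/v_{n-1})^n\vol(K)^{1-n}$, with equality iff $K$ is an ellipsoid. Combining these,
\[
\lambda^n\ge\frac{\vol(K-K)}{\vol((\Pi K)^\circ)}\ge\left(\frac{2v_{n-1}}{v_n}\vol(K)\right)^{n},
\]
which is exactly the required inequality. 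Equality throughout forces equality in the Petty projection inequality, hence $K$ is an ellipsoid; conversely, every ellipsoid makes all three inequalities sharp (its difference body, projection body, and polar projection body are mutually homothetic ellipsoids). The main obstacle here is conceptual rather than computational: spotting the polarity reformulation that turns the problem into a statement about $K-K$ and $(\Pi K)^\circ$, and recognizing the Petty projection inequality as the sharp tool that supplies precisely the ellipsoidal equality case. Once this is in place, both the bound and its rigidity are immediate from classical inequalities.
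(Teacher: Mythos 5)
Your proof is correct, and while it rests on the same key inequality as the paper's (Petty's projection inequality, which is exactly what the paper invokes in its display (4), there written out in polar-integral form as $\int_{\Sph^{n-1}} \vol_{n-1}(K|u^\perp)^{-n}\,\dif u = n\vol((\Pi K)^\circ)$), the route you take around the non-symmetric case is genuinely different and arguably cleaner. The paper first reduces to centrally symmetric bodies via Steiner symmetrization (Lemma 2 of Rogers--Shephard to show $c^{tr}$ does not increase, plus Lemma 10 of Martini--Mustafaev to guarantee that a non-ellipsoid can be symmetrized into a symmetric non-ellipsoid, which is needed for the equality case), and only then identifies the maximal chord length with twice the radial function and integrates the pointwise bound in polar coordinates. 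You instead work with the difference body from the start: identifying $M(u)=\rho_{K-K}(u)$ and $\vol_{n-1}(K|u^\perp)=1/\rho_{(\Pi K)^\circ}(u)$ turns the extremal quantity into the smallest $\lambda$ with $K-K\subseteq\lambda(\Pi K)^\circ$, and then the lower bound $\vol(K-K)\ge 2^n\vol(K)$ from Brunn--Minkowski absorbs the asymmetry in one line (for symmetric $K$ it is an identity, so nothing is lost). This buys you a self-contained argument that avoids the symmetrization machinery entirely, and the equality analysis is cleaner too: sharpness forces equality in Petty alone, which already yields the ellipsoid, whereas the paper must track the non-ellipsoid property through the symmetrization process. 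Both proofs check the value for the ball and invoke affine invariance for the converse. The only points worth stating explicitly if you write this up are that $\Pi K$ has the origin as an interior point (so the polarity $h_{\Pi K}=1/\rho_{(\Pi K)^\circ}$ is legitimate) and that equality in $\vol(K-K)\le\lambda^n\vol((\Pi K)^\circ)$ under the containment forces $K-K=\lambda(\Pi K)^\circ$; neither is a gap, both are immediate.
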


\begin{proof}
Since for ellipsoids $c^{tr}(K) = 1 + \frac{2v_{n-1}}{v_n}$,
it suffices to show that if $c^{tr}(K) \leq 1 + \frac{2v_{n-1}}{v_n}$, then $K$ is an ellipsoid.

Let $K \in \K_n$ be a convex body such that $c^{tr}(K) \leq 1 + \frac{2v_{n-1}}{v_n}$. 
Consider the case that $K$ is not centrally symmetric.
Let $\sigma : \K_n \to \K_n$ be a Steiner symmetrization about any hyperplane.
Then Lemma 2 of \cite{rogers-shephard2} yields that $c^{tr}(K) \geq c^{tr}(\sigma(K))$.
On the other hand, Lemma 10 of \cite{MM06} states that, for any not centrally symmetric convex body, there is an orthonormal basis
such that subsequent Steiner symmetrizations, through hyperplanes perpendicular to its vectors, yields a centrally symmetric 
convex body, different from ellipsoids.
Combining these statements, we obtain that there is an $o$-symmetric convex body $K' \in \K_n$ that is not an ellipsoid and satisfies
$c^{tr}(K) \geq c^{tr}(K')$.
Thus, it suffices to prove the assertion in the case that $K$ is centrally symmetric.

Assume that $K$ is $o$-symmetric, and that $c^{tr}(K) \leq 1 + \frac{2v_{n-1}}{v_n}$.
For any $u \in \Sph^{n-1}$, let $d_K(u)$ denote the length of a maximal chord parallel to $u \in \Sph^{n-1}$.
Observe that for any such $u$, $K$ and $d_K(u) u + K$ touch each other and
\begin{equation}\label{eq:translate}
\frac{\vol (\conv (K \cup (d_K(u) u + K)))}{\vol(K)} = 1 + \frac{d_K(u) \vol_{n-1}(K|u^\perp)}{\vol(K)}.
\end{equation}
Clearly, $c^{tr}(K)$ is the maximum of this quantity over $u \in \Sph^{n-1}$.

Let $u \mapsto r_K(u) = \frac{d_K(u)}{2}$ be the radial function of $K$.
From (\ref{eq:translate}) and the inequality $c^{tr}(K) \leq 1 + \frac{2v_{n-1}}{v_n}$, we obtain that for any $u \in \Sph^{n-1}$
\begin{equation}\label{eq:inequality}
\frac{v_{n-1} \vol(K)}{v_n \vol_{n-1}(K|u^\perp)} \geq r_K(u).
\end{equation}
Applying this for the polar form of the volume of $K$, we obtain
\[
\vol(K) = \frac{1}{n} \int\limits_{\Sph^{n-1}} \left( r_K(u) \right)^n \dif u \leq \frac{1}{n} \frac{v_{n-1}^n}{v_n^n} \left( \vol(K) \right)^n \int\limits_{\Sph^{n-1}} \frac{1}{\left( \vol_{n-1}(K|u^\perp) \right)^n} \dif u ,
\]
which yields
\begin{equation}\label{eq:oneside}
\frac{v_n^n n}{v_{n-1}^n \left( \vol(K) \right)^{n-1}} \leq \int\limits_{\Sph^{n-1}} \frac{1}{\left( \vol_{n-1}(K|u^\perp) \right)^n} \dif u
\end{equation}

On the other hand, combining Cauchy's surface area formula with Petty's projection inequality, we obtain that for every $p \geq -n$,
\[
v_n^{1/n}\left( \vol(K) \right)^{\frac{n-1}{n}} \leq v_n\left(\frac{1}{nv_{n}}\int\limits_{S^{n-1}}\left(\frac{\vol_{n-1}(K|u^\perp)}{v_{n-1}}\right)^p \dif u \right)^{\frac{1}{p}},
\]
with equality only for Euclidean balls if $p > -n$, and for ellipsoids if $p=-n$ (cf. e.g. Theorems 9.3.1 and 9.3.2 in \cite{gardner}).

This inequality, with $p=-n$ and after some algebraic transformations, implies that
\begin{equation}\label{eq:otherside}
\int\limits_{\Sph^{n-1}} \frac{1}{\left( \vol_{n-1}(K|u^\perp) \right)^n} \dif u \leq \frac{v_n^n n}{v_{n-1}^n \left( \vol(K) \right)^{n-1}}
\end{equation}
with equality if, and only if, $K$ is an ellipsoid.
Combining (\ref{eq:oneside}) and (\ref{eq:otherside}), we can immediately see that if $c^{tr}(K)$ is minimal, then $K$ is an ellipsoid, and
in this case $c^{tr}(K) = 1 + \frac{2v_{n-1}}{v_n}$.
\end{proof}

If, for a convex body $K \in \K_n$, we have that $\vol (\conv ((v+K) \cup (w+K)))$ has the same value for any touching pair of translates,
let us say that $K$ satisfies the \emph{translative constant volume property}.
In the next part of Section~\ref{sec:standard}, we characterize the plane convex bodies with this property.
Before doing this, we recall that a $2$-dimensional $o$-symmetric convex curve is a Radon curve, if, for the convex hull $K$ of 
a suitable affine image of the curve, it holds that $K^\circ$ is a rotated copy of $K$ by $\frac{\pi}{2}$ (cf. \cite{MS06}).
Furthermore, a norm is a \emph{Radon norm} if the boundary of its unit disk is a Radon curve.

\begin{theorem}\label{thm:pointreflection}
For any plane convex body $K \in \K_2$ the following are equivalent.
\begin{itemize}
\item[(1)] $K$ satisfies the translative constant volume property.
\item[(2)] The boundary of $\frac{1}{2}(K-K)$ is a Radon curve.
\item[(3)] $K$ is a body of constant width in a Radon norm.
\end{itemize}
\end{theorem}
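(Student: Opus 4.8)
The plan is to pass to the central symmetral $D=\frac{1}{2}(K-K)$, an $o$-symmetric convex body, and to translate all three conditions into statements about its radial function $r_D$ and support function $h_D$. Two elementary identities drive everything. First, the longest chord of $K$ in a direction $u$ is the radial function of the difference body, so $d_K(u)=2r_D(u)$. Second, the length of the projection $K|u^\perp$ is the Euclidean width of $K$ (equivalently of $D$) in the direction orthogonal to $u$, so $\vol_1(K|u^\perp)=2h_D(Ru)$, where $R$ denotes the rotation by $\frac{\pi}{2}$.

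To prove (1)$\Leftrightarrow$(2), I would first note that two translates of $K$ touch precisely when their difference lies on $\bd(K-K)$, that is, when it is of the form $d_K(u)u$; hence, since $\area(K)$ is fixed, formula (\ref{eq:translate}) shows that the translative constant volume property is equivalent to $d_K(u)\,\vol_1(K|u^\perp)$ being independent of $u$. By the two identities this reads $r_D(u)\,h_D(Ru)=\mathrm{const}$. Using the polarity relation $h_D(y)=1/r_{D^\circ}(y)$ together with $r_{D^\circ}(Ru)=r_{R^{-1}D^\circ}(u)$, the condition becomes $r_D(u)=\mathrm{const}\cdot r_{R^{-1}D^\circ}(u)$ for all $u$. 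Since a star body is determined by its radial function, this holds if and only if $D$ is a dilate of $R^{-1}D^\circ$, equivalently $D^\circ$ is a rotated, rescaled copy of $D$ by $\frac{\pi}{2}$; this is precisely the defining property of a Radon curve for $\bd D=\bd\frac{1}{2}(K-K)$, which is (2).

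For (2)$\Leftrightarrow$(3), I would use the difference-body description of constant width. If $C$ is the ($o$-symmetric) unit ball of a norm, the $C$-width of $K$ in the direction of outer normal $u$ is $w(K,u)/h_C(u)=2h_D(u)/h_C(u)$, and this is independent of $u$ exactly when $h_D$ is proportional to $h_C$, i.e. when $D$ is a dilate of $C$. Thus $K$ has constant width in the norm of $C$ if and only if $\frac{1}{2}(K-K)$ is a dilate of $C$. Since being a Radon curve is invariant under dilation (indeed under every affine map), assertion (3) — constant width in some Radon norm — holds iff $\frac{1}{2}(K-K)$ is a dilate of a Radon body, i.e. iff $\bd\frac{1}{2}(K-K)$ is a Radon curve, which is (2); conversely, if (2) holds one takes $C=\frac{1}{2}(K-K)$, in whose norm $K$ has constant width $2$.

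The main obstacle I anticipate is the faithful passage from the metric-looking equation $r_D(u)\,h_D(Ru)=\mathrm{const}$ to the affine–duality language of Radon curves. Concretely, I must argue that the scalar in $D^\circ=\mu RD$ is immaterial and that this self-duality is genuinely affine-invariant, so as to match both the ``suitable affine image'' clause in the definition of a Radon curve and the (separately checkable) affine invariance of the constant volume property. In the plane this rests on the adjugate identity $R^{-1}(T^{\ast})^{-1}R=(\det T)^{-1}T$ for linear maps $T$, which shows that the family $\{D:D^\circ=\mu RD\}$ is closed under linear transformations, exactly as required. The remaining ingredients — the two radial/support identities and the difference-body characterization of constant width — are routine bookkeeping.
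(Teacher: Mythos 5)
Your proof is correct, but it reaches (1)$\Leftrightarrow$(2) by a genuinely different route than the paper. Both arguments begin identically: pass to the central symmetral $D=\frac{1}{2}(K-K)$ via the identity $\area(\conv((v+K)\cup(w+K)))=\area(K)+d_K(u)\,w_K(u^\perp)$, so that the translative constant volume property becomes the constancy of $d_K(u)\,w_K(u^\perp)$, i.e.\ of $r_D(u)\,h_D(Ru)$. From there the paper goes synthetic: it encodes this product as an area $A_K(x)=\area\conv\{o,x,y\}$ with $x\perp_B y$ (Birkhoff orthogonality), shows by a short chain argument (from $x\perp_B y\perp_B z$ one gets $A_K(x)\le A_K(y)$ with equality iff $y\perp_B x$) that constancy of $A_K$ forces Birkhoff orthogonality to be symmetric, and then invokes the Martini--Swanepoel characterization of Radon curves by that symmetry. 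You instead dualize: $r_D(u)\,h_D(Ru)=\mathrm{const}$ becomes $D^\circ=\mu RD$ via $h_D=1/r_{D^\circ}$, which is exactly the self-polarity definition of a Radon curve once the scalar $\mu$ and the affine normalization are absorbed --- the one delicate point, which you correctly isolate and settle with the planar adjugate identity $R^{-1}(T^{\ast})^{-1}R=(\det T)^{-1}T$. Your version uses only the definition of Radon curves as stated in the paper (no appeal to the Birkhoff-orthogonality characterization) and proves both directions of (1)$\Leftrightarrow$(2) in a single string of equivalences, whereas the paper's converse is dispatched with ``follows from the definition of Radon curves and polar sets,'' which is essentially your computation. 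You also spell out (2)$\Leftrightarrow$(3) via the difference-body description of constant width, which the paper declares obvious. What the paper's route buys in exchange is a proof that stays within normed-plane geometry and avoids the polar-body bookkeeping.
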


\begin{proof}
Recall that a convex body $K$ is a body of constant width in a normed space with unit ball $M$ if, and only if, its central symmetral $\frac{1}{2}(K-K)$ is a homothetic copy of $M$. Thus, (2) and (3) are clearly equivalent, and we need only show that (1) and (2) are.

Let $K \in \K_2$. For any $u \neq o$, let $w_K(u)$ denote the width of $K$ in the direction of $u$.
Then, using the notation $u = w-v$, for any touching pair of translates, we have
\begin{equation}\label{eq:plane_translate}
\area( \conv ((v+K) \cup (w+K))) = \area(K) + d_K(u) w_K(u^\perp).
\end{equation}

Since for any direction $u$, we have $d_K(u) = d_{\frac{1}{2}(K-K)}(u)$ and $w_K(u) = w_{\frac{1}{2}(K-K)}(u)$, $K$ satisfies the translative constant volume property if, and only if, its central symmetral does. Thus, we may assume that $K$ is $o$-symmetric.
Now let $x \in \bd K$. Then the boundary of $\conv(K \cup (2x+K))$ consists of an arc of $\bd K$,
its reflection about $x$, and two parallel segments, each contained in one of the two common supporting lines of $K$ and $2x+K$,
which are parallel to $x$.
For some point $y$ on one of these two segments, set $A_K(x) = \area \conv \{ o,x,y\}$ (cf. Figure~\ref{fig:radon}).
Clearly, $A_K(x)$ is independent of the choice of $y$.
Then we have for every $x \in \bd K$, that $d_K(x) w_K(x^\perp) = 8 A_K(x)$.

\begin{figure}[here]
\includegraphics[width=0.5\textwidth]{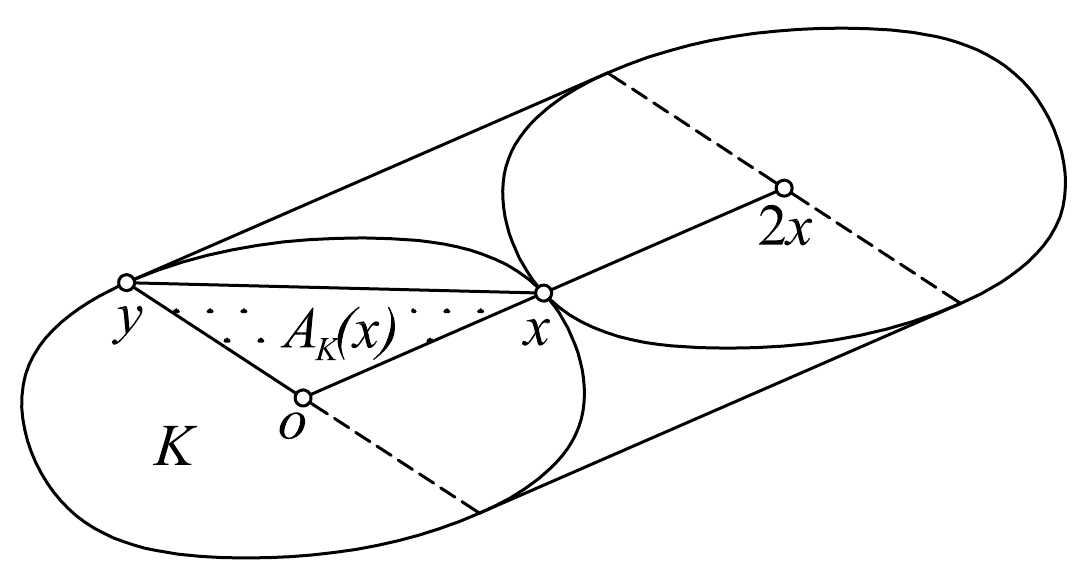}
\caption[]{An illustration for the proof of Theorem~\ref{thm:pointreflection}}
\label{fig:radon}
\end{figure}

Assume that $A_K(x)$ is independent of $x$. We need to show that in this case $\bd K$ is a Radon curve.
It is known (cf. \cite{MS06}), that $\bd K$ is a Radon curve if, and only if, 
in the norm of $K$, \emph{Birkhoff-orthogonality} is a symmetric relation.
Recall that in a normed plane with unit ball $K$, a vector $x$ is called \emph{Birkhoff-orthogonal} to a vector $y$, denoted by $x \perp_B y$,
if $x$ is parallel to a line supporting $|| y || \bd K$ at $y$ (cf. \cite{AB89}).

Observe that for any $x, y \in \bd K$, $x \perp_B y$ if, and only if, $A_K(x) = \area(\conv \{ o,x,y\})$, or in other words, if
$\area(\conv\{o,x,y\})$ is maximal over $y \in K$.
Clearly, it suffices to prove the symmetry of Birkhoff orthogonality for $x,y \in \bd K$.
Consider a sequence $x \perp_B y \perp_B z$ for some $x, y, z \in \bd K$.
Then we have $A_K(x) = \area \conv \{ o,x,y \}$ and $A_K(y) = \area(\conv \{ o,y,z\})$.
By the maximality of $\area(\conv \{ o,y,z\})$, we have $A_K(x) \leq A_K(y)$ with equality if, and only if, $y \perp_B x$.
This readily implies that Birkhoff orthogonality is symmetric, and thus, that $\bd K$ is a Radon curve.
The opposite direction follows from the definition of Radon curves and polar sets.
\end{proof}

\begin{remark}\label{rem:translate_plane}
The proof of Theorem~\ref{thm:pointreflection} can be modified to prove Theorem~\ref{thm:translate} in the plane.
\end{remark}

We sketch this proof. We note that a simplified version of this argument can be applied for Theorem~\ref{thm:hyperplane} in the planar case.

\begin{proof}
Using (\ref{eq:plane_translate}), we obtain that $c^{tr}(K) = 1 + \frac{\max \{ d_K(u) w_K(u^\perp) : u \in \Sph^{n-1}\} }{\area(K)}$.
Note that the numerator in this expression is the same for $\frac{1}{2}(K-K)$ as for $K$.
By the Brunn-Minkowski Inequality, $\area(K) \leq \area\left(\frac{1}{2}(K-K) \right)$, with equality if, and only if $K$ is centrally symmetric, and thus,
it suffices to prove the assertion under the assumption that $K$ is $o$-symmetric.

An argument similar to the one in the proof of Theorem~\ref{thm:pointreflection} yields that there is a Radon curve $g$ such that
$K \subseteq K' = \conv g$ and $\max \{ d_K(u) w_K(u^\perp) : u \in \Sph^{n-1}\} = \max \{ d_{K'}(u) w_{K'}(u^\perp) : u \in \Sph^{n-1}\}$.
This implies that $c^{tr}(K') \leq c^{tr}(K)$, with equality if, and only if $K=K'$, and thus, we may assume that $\bd K$ is a Radon curve.
Since $c^{tr}(K)$ is affine invariant, we may also assume that $K^\circ$ is the rotated copy of $K$ by $\frac{\pi}{2}$; in this case
$d_K(u) w_K(u^\perp) = 4$ for any $u \in \Sph^{n-1}$.

Finally, from the Blaschke-Santal\'o inequality (cf. \cite{gardner}), we have
\[
\left( \area(K) \right)^2 = \area(K) \area(K^\circ) \leq v_2^2,
\]
with equality if, and only if, $K$ is an ellipse. Thus, $\area(K) \leq v_2$, from which the assertion readily follows.
\end{proof}

The following theorem is an immediate consequence of Lemma 10 of \cite{MM06} and Theorem~\ref{thm:translate}.

\begin{theorem}\label{thm:centralsymmetry}
For any $K \in \K_n$ with $n \geq 2$, $c_0(K) \geq 1 + \frac{2v_{n-1}}{v_n}$, with equality if, and only if, $K$ is an ellipsoid.
\end{theorem}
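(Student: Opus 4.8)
The plan is to imitate the proof of Theorem~\ref{thm:translate}: reduce to the centrally symmetric case by Steiner symmetrization, and there identify $c_0$ with $c^{tr}$.

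The decisive elementary remark is that reflection about a point and translation act identically on a centrally symmetric body. Since $c_0$ and $c^{tr}$ are translation invariant, I may place the centre of such a $K$ at $o$; then the reflection of $K$ about any point $c$ is $2c-K=2c+K$, a translate of $K$, while conversely every translate $K+t$ is the reflection of $K$ about $t/2$. As the constraint $K\cap K'\neq\emptyset$ reads the same in both descriptions, the two families of admissible copies coincide, and therefore $c_0(K)=c^{tr}(K)$ for every centrally symmetric $K$. By Theorem~\ref{thm:translate} this already proves the statement for centrally symmetric bodies, with equality exactly for ellipsoids.

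For a general $K\in\K_n$ I would run the same symmetrization scheme as in Theorem~\ref{thm:translate}. Fix a hyperplane $H$ through $o$ and let $\sigma$ be Steiner symmetrization in $H$. A fibrewise check gives the compatibility identity $\sigma(2c-K)=2\bar c-\sigma(K)$, where $\bar c$ is the orthogonal projection of $c$ onto $H$; that is, $\sigma$ carries a point reflection of $K$ to a point reflection of $\sigma(K)$, with the centre moved into $H$. Combined with the standard inclusion $\conv(\sigma(A)\cup\sigma(B))\subseteq\sigma(\conv(A\cup B))$ and the volume invariance of $\sigma$, this is meant to yield the monotonicity $c_0(\sigma(K))\le c_0(K)$, the point-reflection analogue of the Rogers--Shephard inequality used above for $c^{tr}$. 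Granting this, let $K$ be a body that is not centrally symmetric. By Lemma~10 of \cite{MM06} there is an orthonormal basis along whose coordinate hyperplanes successive Steiner symmetrizations transform $K$ into a centrally symmetric body $K'$ that is not an ellipsoid. Monotonicity gives $c_0(K)\ge c_0(K')$, and by the previous paragraph together with Theorem~\ref{thm:translate}, $c_0(K')=c^{tr}(K')>1+\frac{2v_{n-1}}{v_n}$, the inequality being strict since $K'$ is not an ellipsoid. Hence $c_0(K)>1+\frac{2v_{n-1}}{v_n}$ for every non-centrally-symmetric body, which simultaneously proves the bound in general and shows that equality forces central symmetry, and then, by the symmetric case, forces $K$ to be an ellipsoid.

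The only step that is not a bare citation is the Steiner monotonicity of $c_0$, and within it the genuinely delicate point is that after symmetrization the optimal reflection centre may be taken inside the hyperplane $H$: the identity above only produces centres lying in $H$, whereas an optimal centre for $\sigma(K)$ could a priori lie off $H$. Resolving this should use that $\sigma(K)$ is mirror symmetric in $H$, so that a centre and its mirror image give equal convex-hull volumes, to argue that an optimal centre can be moved into $H$ without decreasing the volume. I expect this reduction of the optimal centre to $H$ — equivalently, the full force of the Rogers--Shephard monotonicity for point reflections — to be the main obstacle; everything else is a direct appeal to Theorem~\ref{thm:translate} and Lemma~10 of \cite{MM06}.
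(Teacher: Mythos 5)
Your overall architecture coincides with the paper's: Theorem~\ref{thm:centralsymmetry} is presented there as an immediate consequence of Lemma~10 of \cite{MM06} and Theorem~\ref{thm:translate}, and the two ingredients you supply --- the identity $c_0(K)=c^{tr}(K)$ for centrally symmetric $K$ (point reflections of an $o$-symmetric body are exactly its translates, with the same intersection constraint), and the reduction of a non-symmetric body to a centrally symmetric non-ellipsoid by successive Steiner symmetrizations --- are exactly the intended ones. Your first paragraph is correct and complete.

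The step you flag as delicate is, however, a genuine gap, and the repair you propose for it would fail. Note first that $K\cap(2p-K)\neq\emptyset$ if and only if $p\in K$, and that $p\mapsto \vol(\conv(K\cup(2p-K)))$ is \emph{convex} along every line: as $p$ moves linearly, $2p-K$ translates linearly, so the pair is a linear parameter system in the sense of \cite{rogers-shephard2}. Applied to $\sigma(K)$ on a fibre perpendicular to $H$, this convexity shows that the value at the midpoint of the fibre --- which is precisely the point lying on $H$ --- is \emph{at most} the common value at a mirror-symmetric pair of centres, not at least; so the mirror symmetry of $\sigma(K)$ cannot be used to push an optimal centre into $H$ without loss. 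Concretely, if $\sigma(K)$ is a Euclidean ball centred on $H$, the optimal reflection centre is a boundary point at maximal distance from $H$, and its projection into $H$ collapses the configuration to the ball itself. Hence your inclusion $\conv(\sigma(A)\cup\sigma(B))\subseteq\sigma(\conv(A\cup B))$ only controls centres lying in $H$, which does not bound $c_0(\sigma(K))$. The monotonicity $c_0(\sigma(K))\le c_0(K)$ is nevertheless true, and the correct route is the one the paper already invokes for $c^{tr}$ in the proof of Theorem~\ref{thm:translate}, namely Lemma~2 of \cite{rogers-shephard2}: realize the Steiner symmetrization as the midpoint $t=\tfrac12$ of a linear parameter system joining $K$ (at $t=0$) to its mirror image in $H$ (at $t=1$); by the convexity above the maximum over $p\in\sigma(K)$ is attained at an extreme point, hence at an endpoint of a fibre, and such endpoints move linearly with $t$, so the reflected copies ride along as part of the same system; convexity of the volume of the convex hull in $t$ then bounds the value at $t=\tfrac12$ by the larger of the two end values, both of which are attained by $K$ itself. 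Replacing your symmetry argument by this citation (or by the sketch just given) closes the gap; everything else in your proposal stands.
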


Our next result shows an inequality for $c_{n-1}(K)$.

\begin{theorem}\label{thm:hyperplane}
For any $K \in \K_n$ with $n \geq 2$, $c_{n-1}(K) \geq 1+\frac{2v_{n-1}}{v_n}$, with equality if, and only if, $K$ is a Euclidean ball.
\end{theorem}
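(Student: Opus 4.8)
The plan is to compute $c_{n-1}(K)$ exactly in terms of reflections about supporting hyperplanes, reduce the theorem to a lower bound on the largest circumscribed cylinder of $K$, and then establish that cylinder bound by combining the projection inequality quoted above with Urysohn's inequality.

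First I would fix a unit vector $u\in\Sph^{n-1}$ and parametrize the reflections whose mirror hyperplane is orthogonal to $u$. Writing the hyperplane as $\{x:\langle x,u\rangle=t\}$, the reflected copy is $R_u(K)+2tu$, where $R_u$ denotes the linear reflection in $u^\perp$; as $t$ varies this is a translate of the fixed body $R_u(K)$ moving along $\Re u$ at constant velocity, and it meets $K$ exactly when the hyperplane meets $K$, i.e.\ for $t$ in the projection of $K$ onto $\Re u$. By the convexity of $\vol(\conv(\cdot))$ under such a motion (the F\'ary--R\'edei/Rogers--Shephard property recalled in the introduction), for fixed $u$ the maximum over $t$ is attained at an endpoint, that is, when the mirror is a supporting hyperplane of $K$. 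A fibrewise computation then identifies $\conv(K\cup\sigma_H(K))$ for the upper supporting hyperplane $H$ orthogonal to $u$ as the body lying between the lower boundary of $K$ and its reflection (this is exact because the lower boundary of $K$ is convex and its mirror image is concave), giving
\[
\vol(\conv(K\cup\sigma_H(K)))=2\vol(K)+2U(u),
\]
where $U(u)$ is the volume of the part of the circumscribed cylinder of $K$ in direction $u$ lying above $K$. Hence $c_{n-1}(K)\,\vol(K)=2\vol(K)+2\max_u U(u)$.

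Next I would pass from $U(u)$ to the circumscribed cylinder itself. Writing $L(u)$ for the analogous quantity below $K$, one has $U(u)+L(u)=w_K(u)\vol_{n-1}(K|u^\perp)-\vol(K)$, the excess of the circumscribed cylinder over $K$, where $w_K(u)$ is the width. Since $L(u)=U(-u)$, taking the better of the two endpoints yields
\[
\max_{u}U(u)\ \ge\ \tfrac12\Bigl(\max_{u}w_K(u)\vol_{n-1}(K|u^\perp)-\vol(K)\Bigr),
\]
so the theorem reduces to the \emph{cylinder inequality} $\max_{u\in\Sph^{n-1}}w_K(u)\vol_{n-1}(K|u^\perp)\ge\frac{2v_{n-1}}{v_n}\vol(K)$, with equality if and only if $K$ is a Euclidean ball (for the ball every direction is an equality case, and the displayed step also becomes an equality since $U(u)=L(u)$).

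The heart of the argument, and the step I expect to be the main obstacle, is this cylinder inequality; the key idea is to prove it in contrapositive form. Assume $w_K(u)\vol_{n-1}(K|u^\perp)\le\frac{2v_{n-1}}{v_n}\vol(K)$ for every $u$, so that $w_K(u)\le\frac{2v_{n-1}}{v_n}\,\frac{\vol(K)}{\vol_{n-1}(K|u^\perp)}$. Integrating over $\Sph^{n-1}$, the left side becomes $nv_n$ times the mean width $W(K)$, while the right side is controlled by the case $p=-1$ of the inequality obtained above from Cauchy's formula and Petty's projection inequality, which gives $\int_{\Sph^{n-1}}\vol_{n-1}(K|u^\perp)^{-1}\,\dif u\le\frac{nv_n}{v_{n-1}}\bigl(v_n/\vol(K)\bigr)^{(n-1)/n}$, with equality only for balls. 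Combining the two estimates and simplifying yields $W(K)\le 2\bigl(\vol(K)/v_n\bigr)^{1/n}$, which is the reverse of Urysohn's inequality; since Urysohn gives the opposite bound with equality only for balls, $K$ must be a ball. Reading the chain backwards produces both the inequality $c_{n-1}(K)\ge 1+\frac{2v_{n-1}}{v_n}$ and its equality case. I would expect the remaining delicate points to be purely bookkeeping: verifying the fibrewise volume formula at the supporting hyperplane, and checking that all equality cases trace back to the common equality case (the ball) of the projection inequality and of Urysohn's inequality.
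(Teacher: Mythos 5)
Your proof is correct, and its skeleton is the same as the paper's: the reduction to supporting hyperplanes, the fibrewise identity $\vol(\conv(K\cup K_\sigma))+\vol(\conv(K\cup K_{-\sigma}))=2\vol(K)+2w_K(u)\vol_{n-1}(K|u^\perp)$ for the two parallel supporting hyperplanes orthogonal to $u$, and the resulting bound $c_{n-1}(K)\ge 1+\max_u w_K(u)\vol_{n-1}(K|u^\perp)/\vol(K)$ are exactly the paper's inequality \eqref{eq:hypcylinder} (you in fact supply more detail, justifying the restriction to supporting hyperplanes via F\'ary--R\'edei convexity and verifying the fibrewise volume formula, both of which the paper passes over with ``clearly'' and ``observe''). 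Where you genuinely diverge is in proving the cylinder inequality $\max_u w_K(u)\vol_{n-1}(K|u^\perp)\ge\frac{2v_{n-1}}{v_n}\vol(K)$. The paper gets it in one line from $d_K(u)\le w_K(u)$, which yields $c_{n-1}(K)\ge c^{tr}(K)$, and then invokes Theorem~\ref{thm:translate} (hence ultimately the $p=-n$ case of the Petty projection inequality), settling equality by observing that an ellipsoid with $d_K\equiv w_K$ is a ball. You instead integrate the negation of the cylinder inequality over $\Sph^{n-1}$ and play the $p=-1$ case of the same Cauchy--Petty estimate against Urysohn's inequality; I checked the normalizations and the resulting bound $W(K)\le 2(\vol(K)/v_n)^{1/n}$ is indeed the reverse of Urysohn, so the contrapositive closes. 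Both routes are valid: the paper's is shorter and yields the intermediate fact $c_{n-1}(K)\ge c^{tr}(K)$ as a bonus, but makes Theorem~\ref{thm:hyperplane} depend on Theorem~\ref{thm:translate}; yours is self-contained (modulo Urysohn), uses only the $p=-1$ projection inequality whose equality case is already the ball, and is close in spirit to the alternative proof sketched in the remarks of Section~\ref{sec:remarks}. Your equality analysis also checks out: $c_{n-1}(K)=1+\frac{2v_{n-1}}{v_n}$ forces $\max_u U(u)=\frac{v_{n-1}}{v_n}\vol(K)$, hence $\max_u w_K(u)\vol_{n-1}(K|u^\perp)\le\frac{2v_{n-1}}{v_n}\vol(K)$, and your contrapositive argument then forces $K$ to be a ball.
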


\begin{proof}
For a hyperplane $\sigma \subset \Re^n$, let $K_\sigma$ denote the reflected copy of $K$ about $\sigma$.
Furthermore, if $\sigma$ is a supporting hyperplane of $K$, let $K_{-\sigma}$ be the reflected copy of $K$ about the other supporting hyperplane
of $K$ parallel to $\sigma$.
Clearly,
\[
c_{n-1}(K) = \frac{1}{\vol(K)} \max \{ \vol ( \conv (K \cup K_\sigma)) : \sigma \hbox{ is a supporting hyperplane of } K \}.
\]

For any direction $u \in\Sph^{n-1}$, let $H_K(u)$ be the right cylinder circumscribed about $K$ and with generators parallel to $u$.
Observe that for any $u \in \Sph^{n-1}$ and supporting hyperplane $\sigma$ perpendicular to $u$, we have
$\vol ( \conv (K \cup K_\sigma)) + \vol ( \conv (K \cup K_{-\sigma}) = 2\vol(K) + 2\vol(H_K(u)) = 2\vol(K) + 2 w_K(u) \vol_{n-1}(K|u^\perp)$.
Thus, for any $K \in \K_n$,
\begin{equation}\label{eq:hypcylinder}
c_{n-1}(K) \geq 1 + \frac{\max \{ w_K(u) \vol_{n-1}(K|u^\perp) : u \in \Sph^{n-1} \} }{\vol(K)} .
\end{equation}

Let $d_K(u)$ denote the length of a longest chord of $K$ parallel to $u \in \Sph^{n-1}$.
Observe that for any $u \in \Sph^{n-1}$, $d_K(u) \leq w_K(u)$, and thus for any convex body $K$,
\[
c_{n-1}(K) \geq c^{tr}(K).
\]
This readily implies that $c_{n-1}(K) \geq 1 + \frac{2v_{n-1}}{v_n}$, and if here there is equality for some $K \in \K_n$, then $K$ is an ellipsoid.
On the other hand, in case of equality, for any $u \in \Sph^{n-1}$ we have $d_K(u) = w_K(u)$, which yields that $K$ is a Euclidean ball.
This finishes the proof of the theorem.
\end{proof}

\section{Discrete versions of the problems in $\Re^2$}\label{sec:discrete}

In this section, let $\P_m$ denote the family of convex $m$-gons in the plane $\Re^2$.
It is a natural question to ask about the minima of the quantities defined in the introduction over $\P_m$.
More specifically, we set
\begin{eqnarray*}
t_m & = & \min \{ c^{tr}(P) : P \in \P_m \};\\
p_m & = & \min \{ c_0(P) : P \in \P_m\};\\
l_m & = & \min \{ c_1(P) : P \in \P_m \}.
\end{eqnarray*}

\begin{theorem}\label{thm:discrete}
We have the following.
\begin{itemize}
\item[(1)] $t_3 = t_4 = 3$ and $t_5 = \frac{10+\sqrt{5}}{5}$. Furthermore, $c^{tr}(P) = 3$ holds for any triangle and quadrilateral, and if $c^{tr}(P) = t_5$ for some $P \in \P_5$, then $P$ is an affine regular pentagon.
\item[(2)] $p_3 = 4$, $p_4 = 3$ and $p_5 = 2 + \frac{4 sin \frac{\pi}{5}}{5}$. Furthermore, in each case, the minimum is attained only for affinely regular polygons.
\item[(3)] $l_3 = 4$ and $l_4 =3$.  Furthermore, among triangles, the minimum is attained only for regular ones, and among quadrilaterals for rhombi.
\end{itemize}
\end{theorem}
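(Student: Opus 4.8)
The plan is to treat the three quantities $c^{tr}$, $c_0$ and $c_1$ separately: in each case I would first prove a reduction that turns the maximization defining the quantity into a finite optimization, and then carry that optimization out for $m \le 5$.

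For $c^{tr}$ I would start from the planar identity $c^{tr}(P) = 1 + \frac{1}{\area(P)} \max_{u \in \Sph^1} d_P(u)\, w_P(u^\perp)$ recorded right after (\ref{eq:translate}), and prove the sharp inequality $d_K(u)\, w_K(u^\perp) \le 2\area(K)$, valid for every $K \in \K_2$ and every direction $u$. After rotating $u$ to be horizontal, a longest horizontal chord together with the highest and the lowest point of $K$ spans two triangles that lie in $K$, are separated by the line of the chord, and have total area $\frac12 d_K(u)\, w_K(u^\perp)$; equality therefore forces $K$ to be the convex hull of these at most four points. Hence $c^{tr}(P) \le 3$ always, equality requiring at most four vertices, so $t_m < 3$ for $m \ge 5$. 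For a triangle the longest chord parallel to $u$ is attained at the level of the middle vertex, and a direct computation gives $d_T(u)\, w_T(u^\perp) = 2\area(T)$ for every $u$, whence $c^{tr}(T) = 3$. For a quadrilateral, taking $u$ parallel to a diagonal $AC$ the longest parallel chord is $AC$ itself and $|AC|\, w_P(u^\perp) = 2\area(P)$, which with the upper bound yields $c^{tr}(P) = 3$. This settles $t_3 = t_4 = 3$ together with the stated characterizations for $m \le 4$.

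For $c_0$ and $c_1$ I would first pin down the touching conditions. Since $\frac12(K+K) = K$, the reflected copy $2p - K$ meets $K$ precisely when $p \in K$, so $c_0(K) = \frac{1}{\area(K)} \max_{p \in K} \area(\conv(K \cup (2p-K)))$; as $2p - K$ is a translate of $-K$ moving linearly with $p$, the convexity of the convex-hull volume (F\'ary--R\'edei, Rogers--Shephard) shows this maximum is attained at a vertex of $K$. Reflecting a triangle about a vertex gives ratio $4$, while for any centrally symmetric body $2p - K$ is a translate of $K$, so $c_0 = c^{tr}$ there and parallelograms give $3$; hence $p_3 = 4$ and $p_4 \le 3$. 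Similarly $\mathrm{refl}_\ell(K)$ meets $K$ iff $\ell$ meets $K$, and translating $\ell$ outward to a supporting line only enlarges the hull, so $c_1(K)$ is the maximum of $\area(\conv(K \cup \mathrm{refl}_\ell K))/\area(K)$ over supporting lines $\ell$. A direct computation shows that the equilateral triangle, reflected in the supporting line through a vertex orthogonal to its symmetry axis, gives $4$, and that a rhombus, reflected in the supporting line through a vertex orthogonal to a diagonal, gives $3$; thus $l_3 \le 4$ and $l_4 \le 3$.

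It remains to prove minimality together with the uniqueness claims and to compute the pentagon values, and this is where the real work lies. Using that $c^{tr}$ and $c_0$ are affine invariant while $c_1$ is only invariant under similarities, I would normalize each competitor to a standard frame, reducing every remaining assertion to the minimization of an explicit function of a few shape parameters; the critical-point analysis must then single out the affine regular polygon (respectively the rhombus for $l_4$, and the regular triangle for $l_3$). The hardest steps are the two pentagon cases: for $t_5$ one must analyze $\max_u d_P(u)\, w_P(u^\perp)$, and for $p_5$ the vertex maximum $\max_v \area(\conv(P \cup (2v-P)))$, show that at the affine regular pentagon these equal $\frac{10+\sqrt5}{5}$ and $2 + \frac{4\sin(\pi/5)}{5}$, and prove global minimality with equality only there. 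I expect the main obstacle to be controlling these maxima over \emph{all} directions (respectively all vertices) simultaneously while minimizing over pentagon shapes, since the optimal direction or vertex changes with the shape, so the objective is only piecewise smooth; for $c_1$ there is the additional difficulty that, lacking affine invariance, the supporting-line optimization must be carried out over a one-parameter family of reflection axes at each candidate configuration.
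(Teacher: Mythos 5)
Your proposal correctly and rather cleanly disposes of the easiest claims: the sharp inequality $d_K(u)\,w_K(u^\perp)\le 2\area(K)$ (two triangles spanned by a longest chord and the two extreme points) does give $c^{tr}(P)\le 3$ with equality only for bodies that are hulls of at most four points, which settles $t_3=t_4=3$ with the stated characterization and shows $t_5<3$; likewise your reductions to vertex reflections (via the F\'ary--R\'edei convexity of linear parameter systems) and to supporting lines are sound and match what the paper takes for granted. But the bulk of the theorem -- the values $t_5=\frac{10+\sqrt5}{5}$ and $p_5=2+\frac{4\sin\frac{\pi}{5}}{5}$, the uniqueness of affine regular pentagons, the lower bounds $p_4\ge 3$, $l_3\ge 4$, $l_4\ge 3$, and the equality characterizations in (2) and (3) -- is left as an unexecuted plan (``normalize and do a critical-point analysis''), and you yourself identify the obstacle: the maximizing direction or vertex changes with the shape, so a direct minimax over shape parameters is genuinely hard. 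As written, this is a proof of roughly the first third of the statement plus a research programme for the rest.

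The paper circumvents exactly the obstacle you name by discretizing the inner maximum before minimizing over shapes. For (1) it shows that for a pentagon with vertices $a_1,\dots,a_5$ the translate by $a_{i+2}-a_i$ gives $\area(\conv(P\cup(a_{i+2}-a_i+P)))=3\area(P)-2\area(\conv\{a_{i+1},a_{i+2},a_{i+3}\})$ (after choosing the smaller of the two flanking triangles), whence $c^{tr}(P)\ge 3-\frac{2}{\area(P)}\min_i\area(\conv\{a_{i-1},a_i,a_{i+1}\})$; the right-hand side is then controlled by the Gronchi--Longinetti result \cite{GL08} that $\min_i\area(\conv\{a_{i-1},a_i,a_{i+1}\})/\area(P)$ is maximal exactly for affine regular pentagons, and equality holds there. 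For (2) the paper splits according to whether $\conv(P\cup(2z-P))$ is a quadrangle, hexagon or octagon, kills the first two cases by affine normalization to a unit square plus short inequality chases, and in the octagon case equalizes the consecutive-triangle areas $A_i$ by a local vertex perturbation, again finishing with \cite{GL08}. For (3) it derives a closed form $\frac{A_i(L_i)}{\area(T)}=\frac{|\cos\alpha_i|}{\sqrt{\cos 2\beta_i\cos 2\gamma_i}}\ge 1$ from the strict concavity of $\log\cos$, with equality only for isosceles configurations. Without these (or equivalent) devices -- in particular without the reduction to the discrete functional $\min_i A_i/\area(P)$ and the appeal to \cite{GL08} -- your outline does not close, so the proposal has a genuine gap covering most of parts (1)--(3).
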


\begin{proof}[Proof of (1)]
It suffices to examine the case that the intersection of the two polygons is a vertex of both.
It is fairly elementary to show that for any triangle and quadrilateral $T$ we have $c^{tr}(T) = 3$. This implies also $t_3=t_4=3$.

Consider a convex pentagon $P$ with vertices $a_i$, $i=1,2,\ldots,5$ in counterclockwise order.
Assume, without loss of generality, that $\area (\conv \{ a_1,a_3,a_4\}) \leq \area (\conv \{ a_1,a_3,a_5\})$.
Observe that in this case $\area(\conv \{ P \cup (a_3-a_1+P)\}) = 3 \area(P) - 2\area(\conv \{a_3,a_4,a_5\})$ (cf. Figure~\ref{fig:translate}).
Repeating this argument for any $a_{i+2}-a_i+P$, we obtain that
\begin{equation}\label{eq:translates_pentagons}
3 - \frac{2\min \{ \area(\conv \{ a_{i-1},a_i,a_{i+1}\}) : i=1,2,\ldots,5 \}}{\area(P)} \leq c^{tr}(P).
\end{equation}

\begin{figure}[here]
\includegraphics[width=0.4\textwidth]{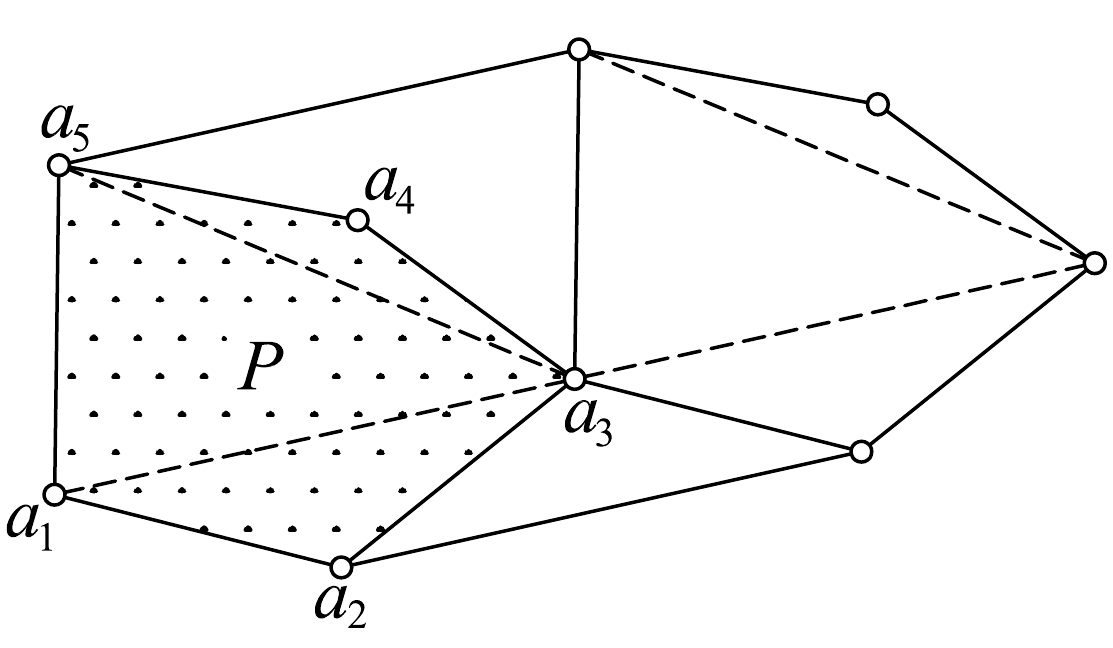}
\caption[]{An illustration for the proof of (1) of Theorem~\ref{thm:discrete}}
\label{fig:translate}
\end{figure}

On the other hand, from \cite{GL08} it follows that, among pentagons, the left-hand side is minimal if, and only if, $P$ is an affine regular pentagon.
Since for any such pentagon the two sides of (\ref{eq:translates_pentagons}) are equal, the assertion readily follows.
\end{proof}

\begin{proof}[Proof of (2)]
For triangles, the statement is trivial and for quadrilaterals it is a simplified version of the one for pentagons.
Hence, we prove only the last case.
Let $P$ be a pentagon such that $c_1(P)$ is minimal, with vertices $a_1,a_2,\ldots, a_5$ in this counterclockwise order.
Since for a regular pentagon $\bar{P}$, $c_1(\bar{P}) = 2 + \frac{4 sin \frac{\pi}{5}}{5} \approx 2.47$,
we may assume that $c_1(P)$ is not less than this quantity, which we denote by $C$.
It suffices to deal with the case that $P$ is reflected about one of its vertices.
For $i=1,2,\ldots, 5$, set $A_i = \area(\conv \{ a_{i-1},a_i,a_{i+1}\} )$.

\emph{Case 1}, $\conv(P \cup (2z-P))$ is a quadrilateral for some vertex $z$ of $P$.
Without loss of generality, we may assume that $z$ is the origin, and, since $c_1(K)$ is invariant under affine transformations,
that this quadrilateral is a unit square. Let $\conv(P \cup (-P)) = \conv\{ a_i, a_{i+1}, -a_i, -a_{i+1}\}$.
Now, observe that $\conv (P \cup (2a_i - P))$ contains two triangles of area $\frac{1}{6}$ that do not overlap $P \cup (2a_i - P)$ (cf. Figure~\ref{fig:reflection}).
Since we clearly have $\area(P) \leq \frac{1}{2}$, this immediately yields that $\area(\conv (P \cup (2a_i-P))) \geq \frac{2\area(P)+\frac{1}{3}}{\area(P)} \geq \frac{8}{3} > C \geq c_1(P)$, a contradiction.

\begin{figure}[here]
\includegraphics[width=0.35\textwidth]{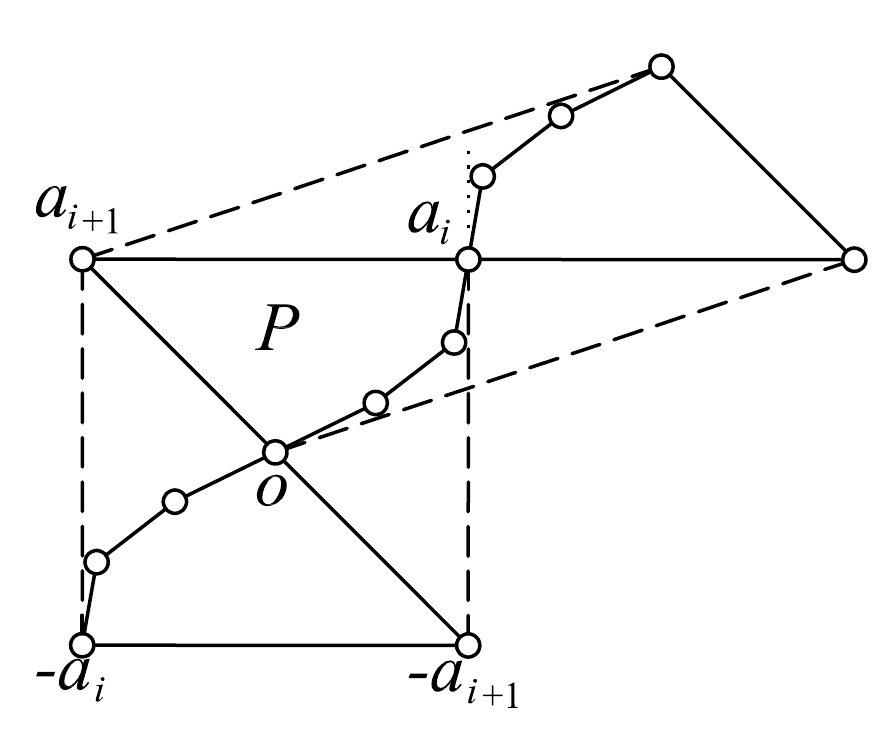}
\caption[]{An illustration for the proof of (2) of Theorem~\ref{thm:discrete}}
\label{fig:reflection}
\end{figure}

\emph{Case 2}, $\conv(P \cup (2z-P))$ is a hexagon for some vertex $z$.
We label the vertices of $P$ in such a way that the vertices of $\conv(P \cup (2z-P))$ are $a_1, a_2, a_3$ and their reflections about $z=a_5$.
Like in Case 1, we may assume that $a_5$ is the origin, and that $a_1,a_3,-a_1,-a_3$ are the vertices of a unit square.
Note that $\area (P) = A_2 + A_4 + \area(\conv \{ a_1,a_3,a_5\}) = A_2 + A_4 + \frac{1}{4}$.
Applying for $i=2$, $i=4$ and $i=5$ the assumption that
\begin{equation}\label{eq:assumption}
\area (\conv \{ P \cup (2a_i-P)\}) \leq C \area(P) < \frac{5}{2} \area (P)
\end{equation}
for every $i$, we obtain that
\begin{equation}\label{eq:1}
3A_2 < A_4 + \frac{1}{4}, \quad 3A_4 < A_2 + \frac{1}{4} \quad \hbox{and} \quad \frac{3}{4} < A_2 + 5A_4.
\end{equation}
On the other hand, this inequality system has no solution.

\emph{Case 3}, $\conv(P \cup (2a_i-P))$ is an octagon for every value of $i$.
Then $c_1(P) = 2 + \frac{2\max \{ A_i : i=1,2,\ldots,5\} }{\area (P)}$.
We show that if $c_1(P)$ is minimal, then $A_i = A_j$ for every $i$ and $j$.
Suppose for contradiction that $A_{i-1} < A_{i+1}$ and that $A_{i+1}$ is maximal for $i$.
Then, by moving $a_i$ parallel to $[a_{i-1},a_{i+1}]$ a little towards $a_{i+1}$, we increase $A_{i-1}$, decrease $A_{i+1}$,
and do not change $\area(P)$ and the rest of the $A_j$s. Thus, decreasing the number of the maxima of the $A_j$s,
we may decrease $c_1(P)$ in at most four steps; a contradiction.
Hence, we may assume that $A_i$ is the same value for every $i$.
On the other hand, it is known that this property characterizes affine regular pentagons (cf. \cite{GL08}).
\end{proof}

\begin{proof}[Proof of (3)]
Let $T$ be a triangle with vertices $a_1$, $a_2$ and $a_3$ in counterclockwise order. Let $\alpha_i$ and $t_i$ be the angle of $T$ at $a_i$ and
the length of the side opposite of $a_i$, respectively.
Consider a line $L$ through $a_1$ that does not cross $T$, and  let $\beta_1$ and $\gamma_1$ be the oriented angles from $L$ to $[a_1,a_2]$,
and from $[a_1,a_3]$ to $L$, respectively. Let $a_2'$ and $a_3'$ be the orthogonal projections of $a_2$ and $a_3$ on $L$, respectively, and set
$A_1(L) = \area(\conv \{ a_1,a_2,a_2' \} )+ \area(\conv \{ a_1,a_3,a_3'\})$.
By elementary calculus, it is easy to see that, among the lines through $a_1$, the one maximizing $A_1(L)$ satisfies $\beta_1 = \gamma_1 = \frac{\pi}{4}$
if $\alpha_1 = \frac{\pi}{2}$, and $t_2^2 \cos 2 \gamma_1 = t_3^2 \cos 2 \beta_1$ otherwise.
This yields, in particular, that for the line maximizing $A_1(L)$, which we denote by $L_1$, we have that $\beta_1$ and $\gamma_1$ are acute.
We define $\beta_i$, $\gamma_i$, $A_i(L)$ and $L_i$ for $i=2,3$ similarly.

By elementary computations, we have that if $\alpha_i \neq \frac{\pi}{2}$, then
\[
\frac{A_i(L_i)}{\area(T)} = \frac{t_{i+1}^2 \sin 2 \gamma_i + t_{i-1}^2 \sin 2 \beta_i}{2t_2t_3 \sin \alpha_i} = \frac{|\cos \alpha_i|}{\sqrt{\cos2\beta_i \cos 2\gamma_i}} 
\]
Since the function $x \mapsto \log \cos x$ is strictly concave on $\left( 0, \frac{\pi}{2} \right)$ and $\left( \frac{\pi}{2}, \pi \right)$,
we have that
\[
\frac{A_i(L_i)}{\area(T)} \geq \frac{|\cos \alpha_i|}{\sqrt{\cos^2 \alpha_i }} = 1, 
\]
with equality if, and only if $\beta_i = \gamma_i$; that is, if $t_{i-1} = t_{i+1}$.
This readily implies that
\[
c_1(T) = 2 + \frac{2 \max \{ A_1(L_1), A_2(L_2), A_3(L_3) \}}{\area(T)} \geq 4,
\]
with equality if, and only if, $T$ is equilateral.
For quadrilaterals, a similar argument yields the assertion.
\end{proof}

\section{Remarks and questions}\label{sec:remarks}

We start with a conjecture.

\begin{conj}
Let $n \geq 3$ and $1 < i < n-1$. Prove that, for any $K \in \K_n$, $c_i(K) \geq 1 + \frac{2v_{n-1}}{v_n}$. Is it true that equality holds only for Euclidean balls?
\end{conj}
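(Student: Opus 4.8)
The plan is to reduce the bound for $c_i(K)$ to the translative case already settled in Theorem~\ref{thm:translate}, by adapting the pairing argument used for hyperplanes in Theorem~\ref{thm:hyperplane}. Fix an orthogonal splitting $\Re^n = V \oplus W$ with $\dim W = i$ and $\dim V = n-i$, and recall that the reflection $\sigma_F$ about an $i$-flat $F = c + W$ (with $c \in V$) acts as the identity on the $W$-coordinates and as the reflection $v \mapsto 2c - v$ in the point $c$ on the $V$-coordinates. A first observation is that $K \cap \sigma_F(K) \neq \emptyset$ precisely when $c \in K|V$, so that touching pairs correspond to $c \in \bd (K|V)$; moreover, if $u \in \Sph^{n-1}\cap V$ is the outer normal of $K|V$ at such a $c$, then $F$ lies in the supporting hyperplane of $K$ with outer normal $u$. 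Thus every touching $i$-flat reflection is anchored, exactly as in the hyperplane case, to a supporting hyperplane of $K$.

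The computational heart is to produce, for each $u \in \Sph^{n-1} \cap V$, a cylinder-type lower bound. Writing $F_+$ and $F_-$ for the $i$-flats lying in the two supporting hyperplanes of $K$ orthogonal to $u$ (through the supporting points $c_+$ and $c_-$ of $K|V$ in directions $u$ and $-u$), I would aim to prove the inequality
\[
\vol(\conv(K \cup \sigma_{F_+}(K))) + \vol(\conv(K \cup \sigma_{F_-}(K))) \geq 2\vol(K) + 2 w_K(u)\,\vol_{n-1}(K|u^\perp),
\]
which for $n-i = 1$ is exactly the identity underlying Theorem~\ref{thm:hyperplane}. Granting this, taking the maximum over $u$ and using $w_K(u) \geq d_K(u)$ would give $c_i(K) \geq c^{tr}(K)$, and Theorem~\ref{thm:translate} would yield $c_i(K) \geq 1 + \frac{2v_{n-1}}{v_n}$. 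To establish the displayed inequality I would slice by $V$ via Fubini, where $\sigma_{F_\pm}$ becomes a genuine point reflection of the $(n-i)$-dimensional bodies, and compare the two convex hulls fiberwise with the circumscribed cylinder $H_K(u)$; the extra reflected directions present when $n - i \geq 2$ should, heuristically, only enlarge the hulls relative to the pure hyperplane reflection.

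For the equality case I would follow the template of Theorem~\ref{thm:hyperplane}: equality propagates back to equality in Theorem~\ref{thm:translate}, forcing $K$ to be an ellipsoid, while the genuine reflection in the at-least-two-dimensional space $V$ should force $d_K(u) = w_K(u)$ for a rich enough family of directions, which upgrades the ellipsoid to a Euclidean ball and confirms the conjectured characterization. One should also check whether an analogue of Lemma 2 of \cite{rogers-shephard2} holds, so that one may first Steiner-symmetrize $K$ into a more symmetric position; note, however, that the subspace $V$ depends on the a priori unknown optimal flat, so a naive reduction to $V$-symmetric bodies is not available.

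The main obstacle I anticipate is precisely the displayed cylinder inequality in codimension $n - i \geq 2$. When $V$ is one-dimensional the two opposite reflections average exactly to the circumscribed cylinder, but for $\dim V \geq 2$ the point reflection mixes all $V$-directions at once, so the clean additive identity used in Theorem~\ref{thm:hyperplane} has no exact analogue, and the extra folding could in principle decrease rather than increase a given hull. Controlling the sign of this effect, together with the rigidity analysis that must distinguish Euclidean balls from general ellipsoids in the equality case, is where the real work lies.
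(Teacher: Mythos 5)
You should first note that this statement is posed in the paper as an open conjecture: the authors give no proof of it, so there is no argument to compare yours against, and a complete solution would be new. Your proposal is in any case explicitly conditional --- you yourself identify the displayed ``cylinder'' inequality as the unproved heart of the matter --- so it is a programme rather than a proof. Unfortunately the programme breaks exactly there: the inequality
\[
\vol(\conv(K \cup \sigma_{F_+}(K))) + \vol(\conv(K \cup \sigma_{F_-}(K))) \geq 2\vol(K) + 2 w_K(u)\,\vol_{n-1}(K|u^\perp)
\]
is false once $\dim V \geq 2$. A first warning sign is the endpoint $i=0$ (where $V=\Re^n$ and $\sigma_{F_\pm}$ are point reflections): there your inequality would force $c_0(K)\geq 1+\max_u w_K(u)\vol_{n-1}(K|u^\perp)/\vol(K)$, contradicting Theorem~\ref{thm:centralsymmetry}, by which $c_0$ equals $1+\frac{2v_{n-1}}{v_n}$ on \emph{every} ellipsoid, including non-spherical ones for which $\max_u w_K(u)\vol_{n-1}(K|u^\perp)$ strictly exceeds $\frac{2v_{n-1}}{v_n}\vol(K)$. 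This is also why the paper handles $i=0$ by Steiner symmetrization rather than by any pairing with the circumscribed cylinder.

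For the range $2\le i\le n-2$ actually covered by the conjecture, a product body imports the same failure. Let $m=n-i\ge 2$, let $E\subset V$ be an $o$-symmetric non-spherical ellipsoid, let $Q\subset W$ be an $i$-dimensional cube, and put $K=E\times Q$. For your flats $F_\pm=c_\pm+W$ one has $\sigma_{F_\pm}(K)=(2c_\pm-E)\times Q=(E+2c_\pm)\times Q$, hence $\conv(K\cup\sigma_{F_\pm}(K))=\conv(E\cup(E+2c_\pm))\times Q$; since $E$ and $E+2c_\pm$ are touching translates of an ellipsoid, (\ref{eq:translate}) gives $\vol(\conv(K\cup\sigma_{F_\pm}(K)))=\bigl(1+\frac{2v_{m-1}}{v_m}\bigr)\vol(K)$ for \emph{every} $u\in\Sph^{n-1}\cap V$. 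So the left-hand side of your inequality is $2\vol(K)+\frac{4v_{m-1}}{v_m}\vol(K)$, whereas the right-hand side is $2\vol(K)+2w_E(u)\vol_{m-1}(E|u^\perp)\vol_i(Q)$, which for $u$ not along an axis of $E$ strictly exceeds $2\vol(K)+2d_E(u)\vol_{m-1}(E|u^\perp)\vol_i(Q)=2\vol(K)+\frac{4v_{m-1}}{v_m}\vol(K)$, and does so by an unbounded factor as $E$ degenerates. The geometric reason is that reflecting in $c_++W$ displaces $K$ only by the chord $2c_+$ of the shadow $K|V$, not by the full width $w_K(u)$, so the fibers of the two hulls need not cover the fibers of $H_K(u)$ and your Fubini comparison has the wrong sign. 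Consequently the reduction $c_i(K)\ge c^{tr}(K)$ is not established by this route, and the equality analysis built on top of it is moot; if you wish to pursue the problem, the pairing must be replaced by something closer to the symmetrization argument used for $c_0$, or the right-hand side weakened to involve $d_K$ rather than $w_K$ from the start.
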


From Theorem~\ref{thm:hyperplane} we obtain the following.

\begin{remark}
For any $K \in \K_n$ with $n \geq 2$, we have $c^{co}(K) \geq 1+\frac{2v_{n-1}}{v_n}$, with equality if, and only if, $K$ is a Euclidean ball.
\end{remark}

In Theorem~\ref{thm:pointreflection}, we proved that in the plane, a convex body satisfies the translative constant volume property if, and only if, it is of constant width in a Radon plane. It is known (cf. \cite{AB89} or \cite{MS06}) that for $n \geq 3$, if every planar section of a normed space is Radon, then the space is Euclidean; that is, its unit ball is an ellipsoid. It is known that there are different convex bodies with the same width and brightness functions, and thus, characterizing the convex bodies satisfying the translative constant volume property seems difficult.
Nevertheless, for centrally symmetric bodies the following seems plausible.

\begin{conj}
Let $n \geq 3$. If some $o$-symmetric convex body $K \in \K_n$ satisfies the translative constant volume property, then $K$ is an ellipsoid.
\end{conj}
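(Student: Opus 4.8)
The plan is to recast the property analytically and recognize it as a rigidity statement about projection bodies. By (\ref{eq:translate}) together with the definition $r_K(u) = d_K(u)/2$ of the radial function, an $o$-symmetric $K$ has the translative constant volume property if, and only if, the product $r_K(u)\,\vol_{n-1}(K|u^\perp)$ is independent of $u \in \Sph^{n-1}$. Writing $\Pi K$ for the projection body of $K$, i.e.\ the $o$-symmetric convex body with support function $h_{\Pi K}(u) = \vol_{n-1}(K|u^\perp)$, and recalling that $r_K(u) = 1/h_{K^\circ}(u)$, this condition reads $h_{\Pi K}(u) = \lambda\, h_{K^\circ}(u)$ for all $u \in \Sph^{n-1}$ and some constant $\lambda > 0$; that is, $\Pi K = \lambda K^\circ$, or equivalently $\Pi^\ast K = \lambda^{-1} K$, where $\Pi^\ast K = (\Pi K)^\circ$ is the polar projection body. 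Ellipsoids satisfy this relation, and the conjecture is exactly that, among $o$-symmetric bodies, they are the only ones. Note that $\Pi K$ is always a zonoid, so the relation already forces $K^\circ$ to be a zonoid; this is a strong restriction, but since polars of zonoids include bodies other than ellipsoids, it is not by itself sufficient.

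First I would try the route suggested by the remark preceding the conjecture, namely to reduce to planar Radon sections and invoke the theorem that a normed space all of whose two-dimensional sections are Radon is Euclidean. The obstacle here is structural: the equation $\Pi K = \lambda K^\circ$ couples the radial function $r_K$, which restricts to a two-dimensional subspace $P$ as the radial function of the section $K \cap P$, with the brightness $\vol_{n-1}(K|u^\perp)$, which is an $(n-1)$-dimensional projection and is not recoverable from any single two-plane. Consequently the relation does not descend to the sections $K \cap P$, and the elegant planar argument of Theorem~\ref{thm:pointreflection} has no direct higher-dimensional analogue. I therefore expect a section-based reduction to fail without substantial new input.

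The approach I would pursue instead is analytic. For $o$-symmetric $K$ the brightness function is the cosine transform of the surface area measure $S_K$,
\[
\vol_{n-1}(K|u^\perp) = \frac{1}{2}\int\limits_{\Sph^{n-1}} |\langle u, v\rangle| \,\dif S_K(v),
\]
so that $\Pi K = \lambda K^\circ$ becomes a nonlinear integral equation relating $S_K$, through the cosine transform, to the radial function of $K$. Since an $o$-symmetric body is determined by its brightness function (Aleksandrov's projection theorem), one may hope to show that, after an affine normalization, the sphere is the only solution, whence $K$ is an ellipsoid. The main obstacle is that no extremal principle forces uniqueness: applying Petty's projection inequality to $\Pi^\ast K = \lambda^{-1} K$ gives $\vol(K)^{n-1}\vol(\Pi^\ast K) = \lambda^{-n}\vol(K)^{n}$, but because the dilation factor $\lambda$ is free, this value need not coincide with the extremal value attained by ellipsoids, so equality in Petty's inequality is not forced and the inequality alone yields nothing. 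Thus the crux is a genuine uniqueness theorem for the equation $\Pi K = \lambda K^\circ$, and the coupling of section-type data ($r_K$) with projection-type data (the brightness) — quantities that are no longer interchangeable once $n \geq 3$ — is precisely what makes the statement resist the classical tools used above and, for now, remain conjectural.
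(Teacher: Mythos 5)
The statement you were asked to prove is stated in the paper as a conjecture: the paper offers no proof of it, so there is nothing to compare your argument against, and your proposal --- which candidly stops short of claiming a proof --- does not close the gap either. What you do contribute is a correct and useful reformulation: for $o$-symmetric $K$ the translative constant volume property is, by (\ref{eq:translate}), equivalent to $r_K(u)\,\vol_{n-1}(K|u^\perp)$ being constant, i.e.\ to $\Pi K = \lambda K^\circ$ for the projection body $\Pi K$. This is consistent with the planar case: for $n=2$ the projection body of an $o$-symmetric $K$ is a rotation of $2K$ by $\pi/2$, so your equation reduces exactly to the Radon-curve condition of Theorem~\ref{thm:pointreflection}, and it correctly forces $K^\circ$ to be a zonoid in all dimensions. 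Your two negative observations are also sound: the brightness function does not localize to two-dimensional sections, so the reduction to planar Radon sections used after Theorem~\ref{thm:pointreflection} in the paper's discussion does not apply; and Petty's projection inequality applied to $\Pi^\ast K = \lambda^{-1}K$ only reproduces the bound $c^{tr}(K) \geq 1 + \frac{2v_{n-1}}{v_n}$ of Theorem~\ref{thm:translate} without forcing the equality case, since $\lambda$ is free.

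The genuine gap, then, is the entire content of the conjecture: a rigidity theorem asserting that $\Pi K = \lambda K^\circ$ has no $o$-symmetric solutions other than ellipsoids. Nothing in your proposal, nor in the paper, supplies this; it is a uniqueness statement for a nonlinear equation coupling the cosine transform of the surface area measure with the radial function, of the same flavour as the open problems on fixed points of the projection body operator, and Aleksandrov's projection theorem by itself only tells you that the brightness determines $K$, not that the coupled equation is rigid. So your submission should be read as a well-motivated reduction and obstruction analysis rather than a proof; to make progress you would need either a new extremal principle in which the constant $\lambda$ is pinned down, or an analytic uniqueness argument (e.g.\ a spherical-harmonics or linearization study of the equation near the ball) that has no counterpart in the paper.
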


Furthermore, we remark that the proof of Theorem~\ref{thm:pointreflection} can be extended, using the Blaschke-Santal\'o inequality, to prove Theorems~\ref{thm:translate} and \ref{thm:centralsymmetry} in the plane.
Similarly, Theorem~\ref{thm:hyperplane} can be proven by a modification of the proof of Theorem~\ref{thm:translate}, in which we estimate the volume of the polar body using the width function of the original one, and apply the Blaschke-Santal\'o inequality.

Like in \cite{rogers-shephard2}, Theorems~\ref{thm:translate} and \ref{thm:hyperplane} yield information about circumscribed cylinders.
Note that the second corollary is a strenghtened version of Theorem 5 in \cite{rogers-shephard2}.

\begin{cor}\label{cor:rightcylinder}
For any convex body $K \in \K_n$, there is a direction $u \in \Sph^{n-1}$ such that the right cylinder $H_K(u)$, circumscribed about $K$
and with generators parallel to $u$ has volume
\begin{equation}\label{eq:rightcylinder}
\vol(H_K(u)) \geq \left( 1 + \frac{2v_{n-1}}{v_n} \right) \vol(K).
\end{equation}
Furthermore, if $K$ is not a Euclidean ball, then the inequality sign in (\ref{eq:rightcylinder}) is a strict inequality.
\end{cor}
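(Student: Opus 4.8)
The plan is to deduce Corollary~\ref{cor:rightcylinder} directly from the inequality for $c_{n-1}(K)$ established in Theorem~\ref{thm:hyperplane}, by recognizing that the circumscribed cylinder volume appears explicitly in the averaging identity used in that proof. Recall from the proof of Theorem~\ref{thm:hyperplane} that for any $u \in \Sph^{n-1}$ and supporting hyperplane $\sigma$ perpendicular to $u$, we have
\[
\vol(\conv(K \cup K_\sigma)) + \vol(\conv(K \cup K_{-\sigma})) = 2\vol(K) + 2\vol(H_K(u)),
\]
and hence $c_{n-1}(K) \geq 1 + \frac{\vol(H_K(u)) - \vol(K)}{\vol(K)}$ holds after taking the maximum over $u$, where I have used $\vol(H_K(u)) = w_K(u)\vol_{n-1}(K|u^\perp)$. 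Thus the quantity $\max_u \vol(H_K(u))$ is tightly linked to $c_{n-1}(K)$.

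First I would make the connection precise. From inequality~(\ref{eq:hypcylinder}) in the proof of Theorem~\ref{thm:hyperplane}, namely $c_{n-1}(K) \geq 1 + \frac{\max_u \vol(H_K(u))}{\vol(K)} - 1$ after noting $\vol(H_K(u)) = w_K(u)\vol_{n-1}(K|u^\perp)$, one sees that $\max_u \vol(H_K(u)) \geq (c_{n-1}(K) - 1)\vol(K)$; but the cleaner route is to observe that the \emph{maximum} over $u$ of $\vol(H_K(u))$ is achieved at some direction $u_0$, and for that direction the larger of the two values $\vol(\conv(K\cup K_{\sigma}))$ and $\vol(\conv(K\cup K_{-\sigma}))$ is at least $\vol(K) + \vol(H_K(u_0))$ by the averaging identity. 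Therefore
\[
c_{n-1}(K)\,\vol(K) \geq \vol(K) + \vol(H_K(u_0)),
\]
which rearranges to $\vol(H_K(u_0)) \leq (c_{n-1}(K)-1)\vol(K)$ in the wrong direction, so I must instead extract the pointwise lower bound. The correct extraction is: choosing $u$ to be the maximizer of $\vol(H_K(u))$ and combining with $c_{n-1}(K) \geq 1 + \frac{2v_{n-1}}{v_n}$ from Theorem~\ref{thm:hyperplane} is not automatic, so the cleaner statement is that \emph{some} circumscribed cylinder is large. I would argue that since
\[
c^{tr}(K) = 1 + \frac{\max_u d_K(u)\vol_{n-1}(K|u^\perp)}{\vol(K)} \leq 1 + \frac{\max_u w_K(u)\vol_{n-1}(K|u^\perp)}{\vol(K)} = 1 + \frac{\max_u \vol(H_K(u))}{\vol(K)},
\]
and since Theorem~\ref{thm:translate} gives $c^{tr}(K) \geq 1 + \frac{2v_{n-1}}{v_n}$, we obtain directly
\[
\max_{u \in \Sph^{n-1}} \vol(H_K(u)) \geq \frac{2v_{n-1}}{v_n}\vol(K),
\]
which is weaker than~(\ref{eq:rightcylinder}) by a factor in the constant term. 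To recover the full constant I would instead use the two-sided identity above at the maximizing direction $u_0$ for $\vol(H_K(u))$: the larger of the two reflected-hull volumes is bounded above by $c_{n-1}(K)\vol(K)$, while their average is exactly $\vol(K) + \vol(H_K(u_0))$, and since $H_K(u_0)$ is the maximum I can compare the smaller hull volume to $\vol(K)$ to close the gap.

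The main obstacle will be bookkeeping the direction of the inequalities so as to land on $1 + \frac{2v_{n-1}}{v_n}$ rather than a smaller constant; the natural pointwise bound from the cylinder identity gives an \emph{upper} bound on $\vol(H_K)$ in terms of $c_{n-1}$, whereas the Corollary asserts a \emph{lower} bound. The resolution is that the maximum of $\vol(\conv(K\cup K_\sigma))$ over supporting hyperplanes equals $c_{n-1}(K)\vol(K)$, and for the direction $u_0$ maximizing the cylinder volume both reflected hulls are large, so one of them forces $\vol(H_K(u_0)) \geq (c_{n-1}(K)-1)\vol(K) \geq \frac{2v_{n-1}}{v_n}\vol(K)$ once the averaging identity is applied at $u_0$ together with the lower bound $\vol(\conv(K\cup K_{-\sigma})) \geq \vol(K) + \vol(H_K(u_0))$ valid at the maximizer. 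Finally, for the equality analysis, I would invoke the characterization from Theorem~\ref{thm:hyperplane}: equality throughout forces $d_K(u) = w_K(u)$ for all $u$, hence $K$ is a Euclidean ball, so if $K$ is not a ball the inequality in~(\ref{eq:rightcylinder}) is strict. The comparison with Theorem~5 of Rogers--Shephard is then just a remark that the present bound holds with the same constant but with the stronger ball-only equality characterization.
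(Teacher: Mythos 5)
Your first derivation is the correct one, and you should not have talked yourself out of it. The chain
\[
\max_{u\in\Sph^{n-1}}\vol(H_K(u))=\max_{u} w_K(u)\vol_{n-1}(K|u^\perp)\geq\max_{u} d_K(u)\vol_{n-1}(K|u^\perp)=\left(c^{tr}(K)-1\right)\vol(K)\geq\frac{2v_{n-1}}{v_n}\vol(K),
\]
using (\ref{eq:translate}) and Theorem~\ref{thm:translate}, is exactly the route the paper intends (no explicit proof is given there; the corollaries are stated as consequences of Theorems~\ref{thm:translate} and \ref{thm:hyperplane}). The constant $\frac{2v_{n-1}}{v_n}$ is not ``weaker by a factor in the constant term''---it is the right constant, and the ``$1+{}$'' in the printed inequality (\ref{eq:rightcylinder}) cannot be correct: for $K=\B^n$ every right circumscribed cylinder has volume $2v_{n-1}$, which is smaller than $\left(1+\frac{2v_{n-1}}{v_n}\right)v_n=v_n+2v_{n-1}$, whereas it equals $\frac{2v_{n-1}}{v_n}\,v_n$; only the latter bound is consistent with the asserted equality case. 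For the equality discussion, equality in the chain forces $c^{tr}(K)=1+\frac{2v_{n-1}}{v_n}$, so $K$ is an ellipsoid by Theorem~\ref{thm:translate}; on an ellipsoid $d_K(u)\vol_{n-1}(K|u^\perp)$ is constant in $u$ by affine invariance, so equality further forces $w_K(u)\leq d_K(u)$ for all $u$, hence $w_K\equiv d_K$, and an ellipsoid all of whose maximal chords are double normals is a ball.

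The second half of your proposal, which tries to upgrade the constant to $1+\frac{2v_{n-1}}{v_n}$, is genuinely broken---necessarily so, since that target is false. From the identity $\vol(\conv(K\cup K_\sigma))+\vol(\conv(K\cup K_{-\sigma}))=2\vol(K)+2\vol(H_K(u))$ the only usable estimates are that the larger summand is at least the average $\vol(K)+\vol(H_K(u))$, which is how (\ref{eq:hypcylinder}) is obtained and which yields the \emph{upper} bound $\vol(H_K(u))\leq\left(c_{n-1}(K)-1\right)\vol(K)$, and that the smaller summand is at least $2\vol(K)$, which yields only the trivial $\vol(H_K(u))\geq\vol(K)$. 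Your key step, that $\vol(\conv(K\cup K_{-\sigma}))\geq\vol(K)+\vol(H_K(u_0))$ ``holds at the maximizer,'' asserts that a specific summand is at least the average of the two; nothing about maximizing $\vol(H_K(u))$ over $u$ justifies this, and applied to the smaller summand it would force both reflected hulls to have equal volume. The intermediate claim $\vol(H_K(u_0))\geq\left(c_{n-1}(K)-1\right)\vol(K)$ is precisely the reverse of (\ref{eq:hypcylinder}). Finally, even if all of this were granted, your concluding chain ends at $\frac{2v_{n-1}}{v_n}\vol(K)$ rather than $\left(1+\frac{2v_{n-1}}{v_n}\right)\vol(K)$, so the proposal never actually reaches the printed inequality. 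Keep the first derivation, correct the constant in the statement, and append the equality analysis above.
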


\begin{cor}\label{cor:eachcylinder}
For any convex body $K \in \K_n$, there is a direction $u \in \Sph^{n-1}$ such that any cylinder $H_K(u)$, circumscribed about $K$
and with generators parallel to $u$, has volume
\begin{equation}\label{eq:eachcylinder}
\vol(H_K(u)) \geq \left( 1 + \frac{2v_{n-1}}{v_n} \right) \vol(K).
\end{equation}
Furthermore, if $K$ is not an ellipsoid, then the inequality sign in (\ref{eq:eachcylinder}) is a strict inequality.
\end{cor}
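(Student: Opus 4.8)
The plan is to deduce the corollary from Theorem~\ref{thm:translate}, so that the analytic core (Cauchy's formula together with Petty's projection inequality) is already in place and only a geometric translation remains. The fact I would isolate first is the following: for a fixed direction $u\in\Sph^{n-1}$, among all cylinders circumscribed about $K$ with generators parallel to $u$, the one of smallest volume is $\conv(K\cup(d_K(u)\,u+K))$, where $d_K(u)$ is the length of a longest chord of $K$ parallel to $u$; the value $t=d_K(u)$ is the least $t$ for which the copies $K$ and $tu+K$ meet only in their common boundary and so serve as the two caps of the cylinder. By the non-symmetric case of the identity in (\ref{eq:translate}), this smallest circumscribed cylinder has volume
\[
\vol\bigl(\conv(K\cup(d_K(u)\,u+K))\bigr)=\vol(K)+d_K(u)\,\vol_{n-1}(K|u^{\perp}),
\]
and every larger circumscribed cylinder parallel to $u$ contains it, hence has at least this volume.

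Next I would choose the direction and invoke the theorem. Since $c^{tr}(K)$ is the maximum over $u$ of the ratio in (\ref{eq:translate}), we have $\max_{u}\bigl(\vol(K)+d_K(u)\,\vol_{n-1}(K|u^{\perp})\bigr)=c^{tr}(K)\,\vol(K)$. Let $u^{*}$ realise this maximum. Then every cylinder circumscribed about $K$ with generators parallel to $u^{*}$ has volume at least $\vol(K)+d_K(u^{*})\,\vol_{n-1}(K|u^{\perp})=c^{tr}(K)\,\vol(K)$, and Theorem~\ref{thm:translate} gives $c^{tr}(K)\geq 1+\frac{2v_{n-1}}{v_n}$. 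Combining the two yields $\vol(H_K(u^{*}))\geq\bigl(1+\frac{2v_{n-1}}{v_n}\bigr)\vol(K)$ for each such $H_K(u^{*})$, which is exactly (\ref{eq:eachcylinder}); the constant is sharp because for a Euclidean ball the smallest circumscribed cylinder in any direction is a capsule of volume $v_n+2v_{n-1}$.

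The strictness is then immediate from the equality case of Theorem~\ref{thm:translate}: if $K$ is not an ellipsoid, then $c^{tr}(K)>1+\frac{2v_{n-1}}{v_n}$, so already the smallest circumscribed cylinder parallel to $u^{*}$, and therefore \emph{every} circumscribed cylinder parallel to $u^{*}$, has volume strictly exceeding $\bigl(1+\frac{2v_{n-1}}{v_n}\bigr)\vol(K)$. The step I expect to be the main obstacle is the geometric bookkeeping of the first paragraph: checking that it is $d_K(u)$, and not the width $w_K(u)$, that measures the smallest circumscribed cylinder parallel to $u$, so that a single maximizer $u^{*}$ of $d_K(u)\,\vol_{n-1}(K|u^{\perp})$ bounds every such cylinder from below. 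This is precisely the point separating the present statement from Corollary~\ref{cor:rightcylinder}, where the right cylinders are governed by $w_K(u)\geq d_K(u)$, and it is the reason the extremal bodies here are ellipsoids rather than Euclidean balls.
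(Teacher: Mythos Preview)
Your overall plan---deduce the cylinder bound from Theorem~\ref{thm:translate} via the quantity $d_K(u)\,\vol_{n-1}(K|u^{\perp})$---is exactly what the paper has in mind (it gives no separate proof, merely citing Theorem~\ref{thm:translate}). But the geometric bookkeeping in your first paragraph is wrong. The set $\conv\bigl(K\cup(d_K(u)\,u+K)\bigr)$ is \emph{not} a cylinder: its two ``caps'' are curved pieces of $\bd K$, not parallel hyperplane sections. More to the point, a circumscribed cylinder with generators parallel to $u$ contains $K$ but has no reason to contain the translate $d_K(u)\,u+K$, so your containment claim fails. For the unit ball $\B^n$ the right circumscribed cylinder has volume $2v_{n-1}$, while your capsule $\conv\bigl(\B^n\cup(2u+\B^n)\bigr)$ has volume $v_n+2v_{n-1}$; the cylinder neither contains the capsule nor matches its volume.

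The correct lower bound is this: any cylinder circumscribed about $K$ with generators parallel to $u$ has cross-section (orthogonal to $u$) covering $K|u^{\perp}$, and each generator segment must be at least as long as the longest chord of $K$ parallel to $u$; hence its volume is at least $d_K(u)\,\vol_{n-1}(K|u^{\perp})$, with no additive $\vol(K)$ term. Choosing $u^{*}$ to maximise this product and using (\ref{eq:translate}) together with Theorem~\ref{thm:translate} gives
\[
\vol(H_K(u^{*}))\ \ge\ d_K(u^{*})\,\vol_{n-1}(K|u^{*\perp})\ =\ \bigl(c^{tr}(K)-1\bigr)\,\vol(K)\ \ge\ \frac{2v_{n-1}}{v_n}\,\vol(K),
\]
strict unless $K$ is an ellipsoid. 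Note that this produces the constant $\tfrac{2v_{n-1}}{v_n}$, not $1+\tfrac{2v_{n-1}}{v_n}$; the ball example above shows the constant printed in (\ref{eq:eachcylinder}) cannot hold, so the ``$1+$'' there (and in (\ref{eq:rightcylinder})) is evidently a misprint inherited from the formula for $c^{tr}(K)$. Your extra $\vol(K)$ comes precisely from conflating the capsule with a cylinder.
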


Let $P_m$ be a regular $m$-gon in $\Re^2$. We ask the following.

\begin{problem}
Prove or disprove that for any $m \geq 3$,
\[
t_m = c^{tr}(P_m), \quad p_m = c_0(P_m), \quad \hbox{and}  \quad l_m = c_1(P_m).
\]
Is it true that for $t_m$ and $p_m$, equality is attained only for affine regular $m$-gons, and for $l_m$, where $m \neq 4$, only for regular $m$-gons?
\end{problem}

\section{acknowledgements}
The support of the J\'anos Bolyai Research Scholarship of the Hungarian Academy of Sciences is gratefully acknowledged.
The authors are indebted to Endre Makai, Jr. for pointing out an error in an earlier version of the proof of Theorem~\ref{thm:translate},
and to an unknown referee for many helpful remarks.

\end{document}